\DeclareFontFamily{T1}{calligra}{}
\DeclareFontShape{T1}{calligra}{m}{n}{<->s*[1.44]callig15}{}
\DeclareMathAlphabet\mathrsfso      {U}{rsfso}{m}{n}
\def\@map#1#2[#3]{\mbox{$#1 \colon\thinspace #2 \longrightarrow #3$}}
\def\map#1#2{\@ifnextchar [{\@map{#1}{#2}}{\@map{#1}{#2}[#2]}}
\renewcommand{\epsilon}{\ensuremath{\varepsilon}}
\renewcommand{\phi}{\ensuremath{\varphi}}
\renewcommand{\to}{\ensuremath{\longrightarrow}}
\renewcommand{\mapsto}{\ensuremath{\longmapsto}}
\newcommand{\Z}{\ensuremath{\mathbb{Z}}}
\newcommand{\FF}{\ensuremath{\mathbb F}}
\newcommand{\F}[1][n]{\ensuremath{\FF_{{#1}}}}
\newcommand{\hyparr}[1]{\ensuremath{\mathrsfso{#1}}}
\newcommand{\brak}[1]{\ensuremath{\left\{ #1 \right\}}}
\newcommand{\ang}[1]{\ensuremath{\left\langle #1\right\rangle}}
\newtheoremstyle{theoremm}{}{}{\itshape}{}{\scshape}{.}{ }{}
\theoremstyle{theoremm}
\newtheorem{thm}{Theorem}
\newtheorem{lem}[thm]{Lemma}
\newtheorem{prop}[thm]{Proposition}
\newtheorem{cor}[thm]{Corollary}
\newtheoremstyle{remark}{}{}{}{}{\scshape}{.}{ }{}
\theoremstyle{remark}
\newtheorem{defn}[thm]{Definition}
\newtheorem{rem}[thm]{Remark}
\newtheorem{rems}[thm]{Remarks}
\newcommand{\aut}[1]{\ensuremath{\operatorname{\text{Aut}}\left({#1}\right)}}
\newcommand{\redefn}[1]{Definition~\protect\ref{defn:#1}}
\newcommand{\rethm}[1]{Theorem~\protect\ref{thm:#1}}
\newcommand{\relem}[1]{Lemma~\protect\ref{lem:#1}}
\newcommand{\reprop}[1]{Proposition~\protect\ref{prop:#1}}
\newcommand{\resec}[1]{Section~\protect\ref{sec:#1}}
\numberwithin{equation}{section}
\begin{document}

\title[Positive cones and bi-orderings on almost-direct products of free groups]{Positive cones and bi-orderings on almost-direct products of free groups}

\author[O.~Ocampo]{Oscar Ocampo}
\address{Universidade Federal da Bahia, Departamento de Matem\'atica - IME, Av.~Milton Santos~S/N, CEP:~40170-110 - Salvador - BA - Brazil}
\email{oscaro@ufba.br}

\author[J.~R.~Theodoro~de~Lima]{Juliana Roberta Theodoro de Lima}
\address{Instituto de Matem\'atica, Universidade Federal de Alagoas, Avenida Lourival Melo Mota, s/n, Tabuleiro dos Martins, 57072-900, Macei\'o-AL, Brazil}
\email{juliana.lima@im.ufal.br}

\subjclass[2010]{Primary: 20F60; Secondary: 20F36, 20F28}
\date{\today}

\keywords{Bi-orderings, positive cones, almost-direct products, free groups, braid groups.}

\date{\today}

\begin{abstract}
\noindent 
Almost-direct products of free groups arise naturally in braid theory and in the study of automorphism groups of free groups. Although bi-invariant orderings are known to exist for many such groups, their explicit structure is often left implicit.
In this paper, we give an explicit description of the positive cones defining bi-invariant orderings on almost-direct products of free groups, using normal forms derived from the almost-direct product decomposition together with Magnus-type orderings on free factors. We establish key structural properties of these cones, including compatibility with natural projections, convexity of canonical subgroups, and invariance under suitable classes of automorphisms. 
As applications, we show how the construction applies to several families of groups of geometric and algebraic interest, such as pure monomial braid groups and McCool groups.

\end{abstract}
\maketitle

\section{Introduction}

Orderable groups play an important role in several areas of group theory and topology. A left-invariant ordering on a group has strong algebraic consequences, such as the absence of zero divisors in the group ring over any field, while the existence of a bi-invariant ordering implies that the group embeds into a division ring.
Beyond these algebraic aspects, orderable groups arise naturally in low-dimensional topology, braid theory, and the study of group actions on one-dimensional manifolds; see, for instance, \cite{CR,DNR,RW}.

Among the classes of groups that frequently appear in these contexts are \emph{almost-direct products of free groups}, namely iterated semidirect products
\[
G \;=\; F_{n_k} \rtimes F_{n_{k-1}} \rtimes \cdots \rtimes F_{n_1},
\]
where the action of each factor on the abelianization of the preceding one is trivial. Such groups occur naturally as fundamental groups of complements of hyperplane arrangements, orbit configuration spaces, and in various subgroups of automorphism groups of free groups; see, for example, \cite{C1,CPVW,FR1,FR2,P,X}.

It is well known that many almost-direct products of free groups admit bi-invariant orderings. This fact can be established using residual properties, or by applying general results on iterated semidirect products with IA-actions, as developed by Kim and Rolfsen \cite{KR} and further extended in related settings by Yurasovskaya \cite{Y}. 
From this point of view, the existence of bi-orderings on these families is by now well understood, thanks in large part to the work of Kim and Rolfsen \cite{KR}.

The purpose of the present article is to study the explicit structure of bi-invariant orderings on almost-direct products of free groups. More precisely, we focus on the description of the \emph{positive cones} defining such orderings, viewed as concrete algebraic objects that encode the order in an explicit and structurally meaningful way. 
While positive cones are implicit in many constructions of bi-orderings, they are rarely made explicit or systematically exploited.

Our contribution is a systematic description of positive cones for bi-invariant orderings on almost-direct products of free groups. These cones are defined using normal forms associated with the almost-direct product decomposition, together with Magnus-type orderings on the free factors \cite{MKS}. This approach yields bi-orderings that are explicit: the sign of an element is determined by the highest nontrivial component in its normal form.

In addition, we investigate several structural properties of the resulting positive cones. In particular, we show that they are compatible with natural projection maps arising from the almost-direct product structure, that canonical subgroups associated with the decomposition are convex, and that the cones are invariant under suitable classes of automorphisms; see also \cite{BN,BNS,RW} for related phenomena in other contexts. These properties highlight the robustness of the constructed orderings and make them suitable for further applications.

As concrete illustrations of our general results, we show how the construction applies to several families of groups of geometric and algebraic interest. These include pure monomial braid groups and McCool groups, which naturally arise in the study of braid groups, configuration spaces, and automorphism groups of free groups; see, for example, \cite{Mc}.

This paper is organized as follows. In Section~\ref{sec:adfreegroups} we recall the definition and basic properties of almost-direct products of free groups, including the existence of normal forms. Section~\ref{sec:cones} contains the construction of the positive cone and the proof that it defines a bi-invariant ordering (Theorem~\ref{thm:main_cone}). In Section~\ref{sec:properties} we establish structural properties of the cone: compatibility with projections, convexity of canonical subgroups, and invariance under automorphisms. Section~\ref{sec:examples} illustrates the construction with explicit families of groups, including pure monomial braid groups, McCool groups, and fundamental groups of hypersolvable arrangements. Finally, Section~\ref{sec:remarks} collects some concluding remarks and perspectives. An appendix extends the construction to almost-direct products of reduced free groups.

The results presented here provide a unified and effective framework for the study of bi-invariant orderings on almost-direct products of free groups. By providing an explicit description of the associated positive cones, our approach opens the way to further investigations in group theory, topology, and dynamics, where such orderings play a fundamental role.

\subsection*{Acknowledgments}

The first author gratefully acknowledges the support of Eliane Santos, the staff of HCA, Bruno Noronha, Luciano Macedo, M\'arcio Isabella, Andreia de Oliveira Rocha, Andreia Gracielle Santana, Ednice de Souza Santos, and SMURB--UFBA (Servi\c{c}o M\'edico Universit\'ario Rubens Brasil Soares), whose assistance since July~2024 was essential in enabling the completion of this work. 
O.~O.~was partially supported by the National Council for Scientific and Technological Development (CNPq, Brazil) through a \textit{Bolsa de Produtividade} grant No.~305422/2022--7.

\section{Almost-direct products of free groups}\label{sec:adfreegroups}

In this section we recall the notion of almost-direct products of free groups and fix the notation that will be used throughout the paper.  We also review basic structural properties of these groups, including the existence of canonical normal forms, which will play a central role in the construction of explicit bi-invariant orderings.

\subsection{Definition and basic properties}

Let $G$ be a group admitting a decomposition as an iterated semidirect product
\[
G \;=\; F_{n_k} \rtimes F_{n_{k-1}} \rtimes \cdots \rtimes F_{n_1},
\]
where each $F_{n_i}$ is a free group of finite rank $n_i$. We say that $G$ is an \emph{almost-direct product of free groups} if, for each $i \geq 2$, the action of
\[
F_{n_{i-1}} \rtimes \cdots \rtimes F_{n_1}
\]
on $F_{n_i}$ induces the trivial action on the abelianization
\[
H_1(F_{n_i},\mathbb{Z}) \;=\; F_{n_i}/[F_{n_i},F_{n_i}].
\]
Equivalently, the corresponding homomorphism
\[
F_{n_{i-1}} \rtimes \cdots \rtimes F_{n_1} \longrightarrow \aut{F_{n_i}}
\]
has image contained in the group of IA-automorphisms of $F_{n_i}$. Here, an automorphism of $F_{n_i}$ is said to be an \emph{IA-automorphism} if it induces the identity on $H_1(F_{n_i},\Z)$.

This notion was introduced and systematically studied in the context of configuration spaces and hyperplane arrangements; see, for instance, \cite{C1,C2,CS,FR1,FR2,P}. Almost-direct products of free groups enjoy several favorable algebraic and homological properties, including residual nilpotence and torsion-freeness in many cases.

\subsection{Normal forms}

A fundamental feature of almost-direct products of free groups is the existence of canonical normal forms. Every element $g \in G$ can be written uniquely as
\[
g \;=\; g_k\, g_{k-1}\cdots g_1,
\qquad g_i \in F_{n_i}.
\]
This expression will be referred to as the \emph{normal form} of $g$ with respect to the chosen almost-direct product decomposition.

The triviality of the induced actions on abelianizations implies that conjugation by elements of $F_{n_{i-1}} \rtimes \cdots \rtimes F_{n_1}$ preserves the lower central series of $F_{n_i}$. In particular, commutators in the factors $F_{n_i}$ behave well with respect to the above normal form, a feature that will play a key role in the construction of explicit bi-invariant orderings.

\subsection{Examples}

We briefly recall some standard examples of almost-direct products of free groups that will be relevant later in the paper.

\begin{itemize}
\item[(i)] \emph{Pure braid groups.}
It is classical that the pure braid group $P_n$ admits a decomposition as an iterated semidirect product of free groups, known as the Artin combing; see \cite{FR2,KR}. Moreover, the induced actions are trivial on abelianizations, so $P_n$ is an almost-direct product of free groups.

\item[(ii)] \emph{Pure monomial braid groups.}
Pure monomial braid groups arise as fundamental groups of orbit configuration spaces associated with finite group actions. They admit decompositions as almost-direct products of free groups, as shown using techniques from configuration spaces and hyperplane arrangements; see \cite{C1,X}.
These groups will provide one of the main classes of examples in the present work.

\item[(iii)] \emph{McCool groups.}
The McCool group $Cb_n$, consisting of basis-conjugating automorphisms of a free group, and its upper triangular subgroup $Cb_n^+$, admit natural decompositions as almost-direct products of free groups; see \cite{CPVW,Mc}.
These groups play an important role in the study of automorphism groups of free groups and will also be considered in later sections.
\end{itemize}

Throughout the paper, unless stated otherwise, we fix an almost-direct product decomposition of $G$ and work with the associated normal forms.

\section{Positive cones and bi-invariant orderings}\label{sec:cones}

This section is devoted to the construction of explicit positive cones defining bi-invariant orderings on almost-direct products of free groups. After recalling basic facts on positive cones and Magnus-type orderings on free groups, we introduce a lexicographic construction adapted to the almost-direct product structure and prove that it yields a bi-invariant ordering.

\subsection{Positive cones and bi-invariant orderings}

We begin by recalling the notion of a positive cone associated with a bi-invariant ordering.

\begin{defn}\label{defn:positive_cone}
Let $G$ be a group.
A subset $P \subset G$ is called a \emph{positive cone} if the following conditions hold:
\begin{itemize}
\item[(i)] $P \cdot P \subset P$;
\item[(ii)] $G = P \sqcup \brak{1} \sqcup P^{-1}$;
\item[(iii)] $P$ is invariant under conjugation, that is,
$gPg^{-1} = P$ for all $g \in G$.
\end{itemize}
In this case, the relation defined by
\[
g < h \quad \Longleftrightarrow \quad g^{-1}h \in P
\]
is a bi-invariant ordering on $G$.
\end{defn}

Conversely, every bi-invariant ordering on a group $G$ determines a positive cone satisfying the above properties. Thus, the study of bi-invariant orderings on $G$ is equivalent to the study of its positive cones; see, for instance, \cite{B,CR}.

\subsection{Magnus-type orderings on free groups}

Let $F_n$ be the free group of rank $n$. We briefly recall that Magnus expansions give rise to bi-invariant orderings on free groups; see \cite{KR,MKS}. Fix a free generating set for $F_n$ and  let $\Z\langle\!\langle X_1,\dots,X_n\rangle\!\rangle$ denote the ring of formal power series in noncommuting variables.
Consider the Magnus embedding
\[
F_n \longrightarrow 1 + \langle X_1,\dots,X_n\rangle \subset \Z\langle\!\langle X_1,\dots,X_n\rangle\!\rangle.
\]
Ordering the target ring lexicographically yields a bi-invariant ordering on $F_n$, whose associated positive cone will be denoted by ${\mathcal P}(F_{n}) \subset F_n$. This ordering will be referred to as a \emph{Magnus-type ordering}.

Throughout the paper, whenever a free factor $F_{n_i}$ appears in an almost-direct product decomposition, it will be equipped with a fixed Magnus-type ordering and corresponding positive cone ${\mathcal P}(F_{n_i})$.

\subsection{Construction of the positive cone}

Let
\[
G \;=\; F_{n_k} \rtimes F_{n_{k-1}} \rtimes \cdots \rtimes F_{n_1}
\]
be an almost-direct product of free groups, as in \resec{adfreegroups}. Every element $g \in G$ admits a unique normal form
\[
g \;=\; g_k g_{k-1}\cdots g_1, \qquad g_i \in F_{n_i}.
\]

\begin{defn}\label{defn:cone_ADP}
Define a subset $P \subset G$ as follows. An element $g = g_k g_{k-1}\cdots g_1$ belongs to $P$ if and only if there exists an index $j \in \brak{1,\dots,k}$ such that
\[
g_j \in {\mathcal P}(F_{n_j})
\quad\text{and}\quad g_i = 1 \text{ for all } i > j.
\]
\end{defn}

In other words, $P$ consists of those elements whose highest nontrivial component in the normal form is positive with respect to the fixed Magnus-type ordering on the corresponding free factor. 

\begin{rem}
The positive cone $P$ is defined lexicographically with respect to the normal form in the almost-direct product decomposition. In particular, an element may have several nontrivial components belonging to the positive cones of lower-index free factors and still be positive in $G$, provided that its highest-index nontrivial component is positive, independently of the lower components. Conversely, the sign of an element is determined solely by this highest-index nontrivial component. 

The choice of the highest-index nontrivial component reflects the hierarchical structure of the iterated semidirect product.
\end{rem}

\subsection{Main result}

We now state and prove the main result of this section. It shows that the lexicographic construction introduced above yields a bi-invariant ordering on any almost-direct product of free groups, defined by an explicit and computable positive cone. This result provides a concrete realization of bi-orderability in this setting and serves as the foundation for the structural properties and applications developed in the subsequent sections.

\begin{thm}\label{thm:main_cone}
Let $G$ be an almost-direct product of free groups. The subset $P \subset G$ defined in \redefn{cone_ADP} is a positive cone.  In particular, it defines a bi-invariant ordering on $G$.
\end{thm}

\begin{proof}
We verify the three defining properties of a positive cone.

\emph{(i) Closure under multiplication.}
Let $g,h \in P$, and write
\[
g = g_k\cdots g_1, \qquad h = h_k\cdots h_1
\]
in normal form. Let $j$ (resp.\ $j'$) be the largest index such that $g_j \neq 1$ (resp.\ $h_{j'} \neq 1$). 
If $j > j'$, then the highest nontrivial component of $gh$ is $g_j \in {\mathcal P}(F_{n_j})$.
If $j = j'$, then the highest nontrivial component of $gh$ is $g_j h_j$, which belongs to ${\mathcal P}(F_{n_j})$ since ${\mathcal P}(F_{n_j})$ is a positive cone in $F_{n_j}$.
The case $j < j'$ is analogous.
Hence $gh \in P$.

\emph{(ii) Trichotomy.}
Every nontrivial element $g \in G$ has a highest nontrivial component $g_j$ in its normal form. Since $P_{n_j}$ is a positive cone in $F_{n_j}$, either $g_j \in P_{n_j}$ or $g_j^{-1} \in P_{n_j}$, implying that either $g \in P$ or $g^{-1} \in P$. Thus $G = P \sqcup \brak{1} \sqcup P^{-1}$.

\emph{(iii) Conjugation invariance.}
Let $x \in G$ and $g \in P$. Write $g = g_k\cdots g_1$ and let $j$ be the highest index such that $g_j \neq 1$. Since $G$ is an almost-direct product, conjugation by $x$ acts trivially on the abelianization of $F_{n_j}$. In particular, conjugation preserves the Magnus-type ordering on $F_{n_j}$. Hence the highest nontrivial component of $xgx^{-1}$ lies again in ${\mathcal P}(F_{n_j})$, and therefore $xgx^{-1} \in P$.

This shows that $P$ is a positive cone, completing the proof.
\end{proof}

The bi-invariant ordering determined by the cone $P$ will be referred to as the \emph{lexicographic bi-ordering} associated with the chosen almost-direct product decomposition and the fixed Magnus-type orderings on the free factors.

\begin{cor}\label{cor:group_ring}
Let $G$ be an almost-direct product of free groups. Then $G$ is torsion-free and has no generalized torsion. Moreover, the integral group ring $\Z G$ embeds into a skew field.
\end{cor}

\begin{proof}
By \rethm{main_cone}, the group $G$ admits a bi-invariant ordering. It is well known that bi-orderable groups are torsion-free and do not contain generalized torsion elements; see, for example, \cite{CR}. Furthermore, by a classical result of Malcev and subsequent refinements, the group ring of a bi-orderable group embeds into a division ring. In particular, $\Z G$ embeds into a skew field.
\end{proof}

\section{Structural properties of the positive cone}\label{sec:properties}

In this section we investigate structural properties of the positive cones constructed in the previous section. In particular, we study their compatibility with natural projection maps, the convexity of canonical subgroups arising from the almost-direct product decomposition, and their behavior under suitable classes of automorphisms.

Throughout this section, let
\[
G \;=\; F_{n_k} \rtimes F_{n_{k-1}} \rtimes \cdots \rtimes F_{n_1}
\]
be an almost-direct product of free groups equipped with the lexicographic bi-ordering defined by the positive cone $P \subset G$ introduced in \redefn{cone_ADP}. We write ${\mathcal P}(F_{n_i}) \subset F_{n_i}$ for the positive cone of the fixed Magnus-type ordering on each free factor.

\subsection{Compatibility with natural projections}

For each $1 \leq j \leq k$, consider the natural projection
\[
\pi_j \colon G \to F_{n_j}
\]
given by forgetting all components except the $j$-th one in the normal form.

\begin{prop}\label{prop:projection}
Let $g \in G$ be written in normal form as $g = g_k\cdots g_1$. If $g \in P$ and $j$ is the largest index such that $g_j \neq 1$, then
\[
\pi_j(g) = g_j \in {\mathcal P}(F_{n_j}).
\]
In particular, the ordering on $G$ restricts to the given Magnus-type ordering on each free factor.
\end{prop}

\begin{proof}
By definition of the cone $P$, the highest nontrivial component $g_j$ of $g$ lies in ${\mathcal P}(F_{n_j})$. Since $\pi_j(g) = g_j$, the claim follows immediately.
\end{proof}

This compatibility shows that the lexicographic bi-ordering on $G$ extends the chosen orderings on the free factors in a coherent way.

\subsection{Convex subgroups}

Convexity of natural subgroups is a fundamental feature of lexicographic orderings.

\begin{defn}
Let $(G,<)$ be an ordered group. A subgroup $H \leq G$ is said to be \emph{convex} if, whenever $h_1 < g < h_2$ for some $h_1,h_2 \in H$ and $g \in G$, then $g \in H$.
\end{defn}

\begin{prop}\label{prop:convex}
For each $1 \leq j \leq k$, the subgroup
\[
G_j \;=\; F_{n_j} \rtimes F_{n_{j-1}} \rtimes \cdots \rtimes F_{n_1}
\]
is convex in $G$ with respect to the lexicographic bi-ordering defined by $P$.
\end{prop}

\begin{proof}
Let $g \in G$ and write $g = g_k\cdots g_1$ in normal form. If $g \notin G_j$, then there exists an index $i > j$ such that $g_i \neq 1$. Hence the highest nontrivial component of $g$ lies in some $F_{n_i}$ with $i>j$. It follows that either $g > h$ for all $h \in G_j$ or $g < h$ for all $h \in G_j$, depending on whether $g_i \in {\mathcal P}(F_{n_i})$ or $g_i^{-1} \in {\mathcal P}(F_{n_i})$. In particular, no element of $G$ can lie strictly between two elements of $G_j$ in the order unless it already belongs to $G_j$.
Thus $G_j$ is convex.
\end{proof}

As a consequence, the lexicographic bi-ordering on $G$ restricts to each subgroup $G_j$ and is compatible with the natural projection $G_j \to F_{n_j}$, inducing the fixed Magnus-type ordering on the free factor $F_{n_j}$.

\subsection{Stability under automorphisms}

We conclude this section by observing that the constructed positive cone behaves well under a natural class of automorphisms.

Let $\phi \in \aut{G}$ be an automorphism preserving the almost-direct product decomposition, that is, $\phi(G_j)=G_j$ for all $j$, with $G_j \;=\; F_{n_j} \rtimes F_{n_{j-1}} \rtimes \cdots \rtimes F_{n_1}$.

\begin{prop}\label{prop:aut}
Suppose that $\phi \in \aut{G}$ satisfies the following properties:
\begin{itemize}
\item[(i)] $\phi(F_{n_j}) = F_{n_j}$ for all $j$;
\item[(ii)] the restriction $\phi|_{F_{n_j}}$ preserves the Magnus-type ordering on $F_{n_j}$.
\end{itemize}
Then $\phi(P)=P$. In particular, $\phi$ preserves the lexicographic bi-ordering on $G$.
\end{prop}

\begin{proof}
Let $g \in P$ and write $g = g_k\cdots g_1$ in normal form. Let $j$ be the largest index such that $g_j \neq 1$.
By assumption, $\phi(g_i)=1$ for all $i>j$, and $\phi(g_j) \in {\mathcal P}(F_{n_j})$ since $\phi|_{F_{n_j}}$ preserves the Magnus-type ordering. Hence $\phi(g)$ has highest nontrivial component in ${\mathcal P}(F_{n_j})$, which implies that $\phi(g)\in P$.
\end{proof}

\begin{rem}
Automorphisms satisfying the hypotheses of \reprop{aut} naturally arise in the study of automorphism groups of pure braid groups and McCool groups; see, for example, \cite{BN,BNS,CPVW}.
\end{rem}

\section{Examples}\label{sec:examples}

In this section we illustrate the general construction developed in \resec{cones} and \resec{properties} by describing explicit positive cones and bi-invariant orderings for several families of groups of geometric and algebraic interest.
In each case, the group under consideration admits a decomposition as an almost-direct product of free groups, and the corresponding ordering is obtained by applying the lexicographic construction introduced earlier.

\subsection{Pure monomial braid groups}

Pure monomial braid groups arise naturally as fundamental groups of orbit configuration spaces associated with finite group actions.
They can be viewed as natural generalizations of pure braid groups and have been studied from both algebraic and topological perspectives; see, for instance, \cite{C1,X}.

Let $M$ be a manifold and let $G$ be a discrete group that acts properly discontinuosly on $M$. Let 
$$
F_G(M,n)=\{ (x_1,\ldots, x_n)\in M^n \mid G\cdot x_i \cap G\cdot x_j = \emptyset \textrm{ if } i\neq j \}
$$
denote the \emph{orbit configuration space of $M$ under the action of $G$}, which consits of all ordered $n$-tuples of points in $M$ which lies in distinct orbits. These spaces were introduced in \cite{X} and have special features in their loop space homology and features of certain Lie algebras.

Let $Q_n^G$ denote the union of $n$ distinct orbits, $G\cdot x_1, \ldots, G\cdot x_n$, in $M$. From \cite[Theorem~2.2]{X}, there are fibrations $F_G(M,n) \to F_G(M,i)$ with fiber over the point $(p_1, p_2,\ldots, p_i)$ in $F_G(M,i)$ given by $F_G(M\setminus Q_n^G,n-i)$. This result is a natural generalization to orbit configuration spaces of the well known Fadell-Neuwirth theorem for configuration spaces \cite[Theorem~3]{FN}.

The orbit configuration space $F_G(\mathbb{C}^{\ast},n)$, where $G=\Z/r\Z$, was discussed in \cite{C1}. It is the complement of the reflection arrangement associated to the (full) monomial group $G(r,n)$, the complex reflection group isomorphic to the wreath product of the symmetric group $S_n$ and $G=\Z/r\Z$. Its fundamental group, denoted by $P(r,n)=\pi_1(F_G(\mathbb{C}^{\ast},n))$, is called the \emph{pure monomial braid group}. 
The special case $r=2$ is a fiber type hyperplane arrangement, its fundamental group considered separately in \cite[Theorem~1.4.3]{C1} was called the \emph{Brieskorn generalized pure braid group} (see also \cite[Remark~2.2.5]{C1}).

It follows from \cite[Theorem~2.1.3 (item 4) and Proposition~2.2.2]{C1} that the pure monomial braid groups $P(r,n)=\rtimes_{j=1}^n F_{r(j-1)+1}$ are almost-direct products of free groups. 
Consequently, $P(r,n)$ falls within the class of groups considered in the previous
sections.

Fix Magnus-type orderings on each free factor $F_{n_j}$ and denote by ${\mathcal P}(F_{n_j})$ the corresponding positive cones.
The positive cone $P \subset P(r,n)$ is then defined as in \redefn{cone_ADP}: an element of $P(r,n)$ is positive if and only if the highest nontrivial component in its normal form lies in ${\mathcal P}(F_{n_j})$ for the corresponding index $j$.

\begin{prop}\label{prop:Prn}
The subset $P \subset P(r,n)$ defined above is a positive cone. In particular, it determines a bi-invariant ordering on the pure monomial braid group
$P(r,n)$.
\end{prop}

\begin{proof}
This is a direct application of \rethm{main_cone}, since $P(r,n)$ is an almost-direct product of free groups and the chosen orderings on the free factors are Magnus-type.
\end{proof}

\begin{rem}
The description of the positive cone given above is explicit: given an element of $P(r,n)$ written in normal form, its sign is determined solely by the sign of its highest nontrivial free component.
\end{rem}

\subsection{Subgroups of the automorphism group of free groups.}

The automorphism group of a free group is one of the most interesting groups in combinatorial group theory. This group and many of their subgroups were deeply studied however many questions still remain open. In this section we consider some of its subgroups. 

Let $F_n$ be a free group of rank $n\geq 2$ generated by $n$ letters $\{ x_1, x_2, \ldots, x_n \}$ and $\aut{F_n}$ be its automorphism group. For any two elements $a$ and $b$ of a group $G$ its commutator is given by $[a,b]=a^{-1}b^{-1}ab$ and let $[G, G]$ denote the commutator subgroup of $G$. The group $\aut{F_n/[F_n, F_n]}$ is isomorphic to the general linear group $GL_n(\Z)$ over the ring of integers.  The kernel of the natural map 
$$
\aut{F_n} \to GL_n(\Z)
$$
consists of automorphisms acting identically modulo the commutator subgroup $[F_n, F_n]$ is called the \emph{IA-automorphism group} and is denoted by $IA_n$, see \cite{MKS}.
Therefore, the following short exact sequence holds
$$
1\to IA_n \to \aut{F_n} \to GL_n(\Z) \to 1.
$$ 

Nielsen (for $n\leq 3$) and Magnus (for all $n$) showed that the group $IA_n$ is generated by automorphisms
$$
\epsilon_{ijk}\colon
\begin{cases}
x_i \mapsto x_i[x_j, x_k] & \textrm{with } i\neq j,k,\, j>i,\\
x_l\mapsto x_l & \textrm{with } l\neq i,
\end{cases}
\quad 
\textrm{and}
\quad
\epsilon_{ij}\colon
\begin{cases}
x_i \mapsto x_j^{-1}x_ix_j & \textrm{with } i\neq j,\\
x_l\mapsto x_l & \textrm{with } l\neq i,
\end{cases}
$$
see \cite{MKS}.

\subsubsection{McCool groups}

Let us consider the subgroup of $IA_n$ generated by the automorphisms $\epsilon_{ij}$, for $1\leq i\neq j\leq n$. It is called the \emph{basis-conjugating automorphism group} and is denoted by $Cb_n$. As mentioned in the introduction of \cite{CPVW} this subgroup has topological interpretations as \emph{the pure of motions of $n$ unlinked circles in $S^3$} and so it is known as the ``group of loops'' or ``loop braid group'',  and it is also known under the name of \emph{the pure braid-permutation group}. A presentation for $Cb_n$ was obtained by McCool \cite{Mc}, and it is also listed in \cite[Theorem~1.1]{CPVW} and \cite[equations (1)-(3)]{BN}.

The subgroup of $Cb_n$ generated by the $\epsilon_{ij}$, for $1\leq i<j\leq n$, denoted by $Cb_n^+$ here, is called the \emph{upper triangular McCool groups} or just \emph{upper McCool groups}.   By \cite[Theorem~1.2]{CPVW} we have the decomposition of $Cb_n^+$ described as an almost-direct product of free groups $Cb_n^+=\rtimes_{j=2}^n F_{j-1}$.  Thus $Cb_n^+$ fits naturally into the framework developed in this paper.

Fix Magnus-type orderings on each free factor $F_{m_j}$ and denote by ${\mathcal P}(F_{m_j})$ the associated positive cones. The positive cone $P \subset Cb_n^+$ is defined lexicographically as in \redefn{cone_ADP}.

\begin{prop}\label{prop:McCool}
The subset $P \subset Cb_n^+$ defines a bi-invariant ordering on the McCool group $Cb_n^+$.
\end{prop}

\begin{proof}
The proof follows immediately from \rethm{main_cone}, since $Cb_n^+$ is an almost-direct product of free groups with IA-actions.
\end{proof}

\begin{rem}
The resulting bi-ordering on $Cb_n^+$ is compatible with the natural filtration by subgroups arising from the almost-direct product decomposition, and the corresponding subgroups are convex by \reprop{convex}.
\end{rem}

\subsubsection{The partial inner automorphism group $I_n$}\label{subsec:In}

Let $F_n=\F[n]$ be the free group of rank $n$. The \emph{partial inner automorphism group} $I_n$ is the subgroup of $\aut{F_n}$ generated by automorphisms that conjugate one basis element by another while fixing the remaining generators.
This group has been studied in connection with basis-conjugating automorphisms and McCool-type groups; see, for instance, \cite{BN,BNS,CPVW}.

Following \cite[Section~2]{BN}, in the group $Cb_n$ we consider the elements
$$
c_{ni}=\epsilon_{1i}\epsilon_{2i}\cdots \epsilon_{ni},\,\, i=1,\ldots,n,
$$
where $\epsilon_{ii}=1$, by definition. The element $c_{ni}$ is an inner automorphism of $F_n$ which is a conjugation by an element $x_i$:
$$
c_{ni}\colon x_k\mapsto x_i^{-1}x_kx_i, \,\, k=1,\ldots, n.
$$
Define a subgroup $H_n\leq Cb_n$, $n\geq 2$, which is generated by elements $c_{ni}$, for $i=1,\ldots, n$. In a similar way, we define subgroups $H_k\leq Cb_n$, $k=2,3,\ldots,n-1$
$$
H_k=\ang{c_{k1}, c_{k2}, \ldots, c_{kk}}.
$$
The group $H_k$ is the inner automorphism group of a group $F_k=\ang{x_1, x_2, \ldots, x_k}$. We define the group $I_n$  called the \emph{partial inner automorphism group} given by
$$
I_n = \ang{H_2, H_3, \ldots, H_n}\leq Cb_n.
$$  
Follows from \cite[Theorem~1]{BN} that $I_n=\rtimes_{j=2}^n F_{j}$ is an almost-direct product of free groups. 
In particular, $I_n$ belongs to the class of groups considered in this paper.

Fix Magnus-type orderings on each free factor $F_{m_j}$ and denote by ${\mathcal P}(F_{m_j})$ the corresponding positive cones. Applying the lexicographic construction of \redefn{cone_ADP}, we obtain a positive cone $P\subset I_n$.

\begin{prop}\label{prop:In}
The subset $P \subset I_n$ defines a bi-invariant ordering on the partial inner automorphism group $I_n$.
\end{prop}

\begin{proof}
Since $I_n$ is an almost-direct product of free groups with IA-actions, the result follows directly from \rethm{main_cone}.
\end{proof}

\begin{rem}
The bi-ordering obtained above is compatible with the natural filtration of $I_n$ induced by its almost-direct product decomposition, and the corresponding subgroups are convex by \reprop{convex}.
\end{rem}

\subsection{Hypersolvable arrangements}\label{subsec:hypersolvable}

Hypersolvable arrangements form a broad class of complex hyperplane arrangements generalizing fiber-type arrangements; see \cite{FR1,FR2,JP,P}. Their complements admit rich topological and algebraic structures, and their fundamental groups have been studied extensively from both perspectives.

Let $V$ be a finite dimensional complex vector space. A finite collection of linear hyperplanes $\hyparr{A}$ contained in $V$ will be called a complex arrangement in $V$.  Let $M_{\hyparr{A}}=V\setminus \cup_{H\in \hyparr{A}}H$ denote the \emph{complement} of $\hyparr{A}$.  We define the \emph{rank} of an arrangement as $rk\, \hyparr{A} = codim_V \left( \cap_{H\in \hyparr{A}} H\right)$.

The \emph{hypersolvable arrangements} are a combinatorial generalization of the fiber-type (supersolvable) class. Now we describe the inductive steps given in \cite[Section~1]{JP} to define it. In what follows we shall consider combinatorial conditions on an \emph{arrangement pair} $(\hyparr{A}, \hyparr{B})$, $\hyparr{B}\subsetneq \hyparr{A}$. 
Set $\overline{\hyparr{B}}=\hyparr{A}\setminus \hyparr{B}$ and denote the elements of $\hyparr{B}$ by $\alpha, \beta, \gamma,\ldots$, and the elements of $\overline{\hyparr{B}}$ by $a, b, c, \ldots$. Now we give a some definitions necessary to introduce the concept of hypersolvable arrangements.

\begin{defn}
The arrangement $\hyparr{B}$ is \emph{closed} in $\hyparr{A}$ if $rk\{ \alpha, \beta, \gamma \}=3$ for any $\alpha, \beta \in \hyparr{B}$, $\alpha\neq \beta$, and any $c\in \overline{\hyparr{B}}$.
\end{defn}

\begin{defn}\label{defn:1.2}
We say that $\hyparr{B}$ is \emph{complete} in $\hyparr{A}$ if given any $a,b\in \overline{\hyparr{B}}$, $a\neq b$, there exists $\gamma \in \hyparr{B}$ such that $rk\{ a, b, \gamma \}=2$.
\end{defn}

\begin{lem}[{\cite[Lemma~1.3]{JP}}]\label{lem:1.3}

If $\hyparr{B}$ is closed and complete in $\hyparr{A}$ then
\begin{enumerate}
	\item $rk\, \hyparr{A} - rk\, \hyparr{B}\leq 1$.
	\item The element $\gamma \in \hyparr{B}$ in \redefn{1.2} is uniquely determined by $a$ and $b$. 
\end{enumerate}
\end{lem}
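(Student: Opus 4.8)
The plan is to reduce the statement to elementary linear algebra in the dual space $V^{*}$. To each hyperplane $H\in\hyparr{A}$ attach a linear functional $\ell_{H}\in V^{*}$ vanishing exactly on $H$; this is well defined up to a nonzero scalar, and for any subcollection $\hyparr{C}\subseteq\hyparr{A}$ the flat $\cap_{H\in\hyparr{C}}H$ is the common zero locus of the $\ell_{H}$, so that $rk\,\hyparr{C}=\dim\mathrm{span}\{\ell_{H}\colon H\in\hyparr{C}\}$. Since the members of an arrangement are pairwise distinct, two distinct hyperplanes have non-proportional, hence linearly independent, defining functionals; consequently, for hyperplanes $H\neq H'$ and an arbitrary $H''$ one has $rk\{H,H',H''\}=2$ if and only if $\ell_{H''}\in\mathrm{span}\{\ell_{H},\ell_{H'}\}$. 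These observations, together with \redefn{1.2} and the closedness and completeness hypotheses, are all that the argument needs.

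For part (2) I would argue by contradiction. Suppose $\gamma_{1}\neq\gamma_{2}$ in $\hyparr{B}$ both satisfy $rk\{a,b,\gamma_{i}\}=2$ for the same pair $a,b\in\overline{\hyparr{B}}$. Applying the criterion above with $H=a$, $H'=b$ gives $\ell_{\gamma_{1}},\ell_{\gamma_{2}}\in\mathrm{span}\{\ell_{a},\ell_{b}\}=:W$, a $2$-dimensional subspace of $V^{*}$. Then $\mathrm{span}\{\ell_{\gamma_{1}},\ell_{\gamma_{2}},\ell_{a}\}\subseteq W$, so $rk\{\gamma_{1},\gamma_{2},a\}\leq 2$; but closedness of $\hyparr{B}$ in $\hyparr{A}$ forces $rk\{\gamma_{1},\gamma_{2},a\}=3$, a contradiction. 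Hence the $\gamma$ of \redefn{1.2} is uniquely determined by $a$ and $b$.

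For part (1) I would fix one $a\in\overline{\hyparr{B}}$ (which exists since $\hyparr{B}\subsetneq\hyparr{A}$), put $U=\mathrm{span}\{\ell_{\beta}\colon\beta\in\hyparr{B}\}$ and $U'=U+\mathbb{C}\ell_{a}$, so $\dim U'\leq rk\,\hyparr{B}+1$, and then show that every defining functional of $\hyparr{A}$ already lies in $U'$. This is clear for $\ell_{a}$ and for the $\ell_{\beta}$, $\beta\in\hyparr{B}$. For an arbitrary $b\in\overline{\hyparr{B}}$ with $b\neq a$, completeness of $\hyparr{B}$ in $\hyparr{A}$ supplies $\gamma\in\hyparr{B}$ with $rk\{a,b,\gamma\}=2$; since $a\neq\gamma$, the criterion above (with $H=a$, $H'=\gamma$, $H''=b$) yields $\ell_{b}\in\mathrm{span}\{\ell_{a},\ell_{\gamma}\}\subseteq U'$, because $\ell_{\gamma}\in U$. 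Therefore $\mathrm{span}\{\ell_{H}\colon H\in\hyparr{A}\}\subseteq U'$ and $rk\,\hyparr{A}=\dim\mathrm{span}\{\ell_{H}\colon H\in\hyparr{A}\}\leq\dim U'\leq rk\,\hyparr{B}+1$.

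I do not anticipate a genuine obstacle: once the passage to $V^{*}$ is in place each part is only a few lines. The points to watch are keeping straight which hypothesis does the work --- closedness in (2), completeness in (1) --- and the degenerate case $\abs{\overline{\hyparr{B}}}=1$ in part (1), where the completeness clause is vacuous but the computation above still gives $\mathrm{span}\{\ell_{H}\colon H\in\hyparr{A}\}=U'$ and hence $rk\,\hyparr{A}\leq rk\,\hyparr{B}+1$ directly. One should also confirm that the ``rank'' of \redefn{1.2} and the surrounding text is exactly ``codimension of the common intersection'' $=$ ``dimension of the span of the defining forms'', which is the standard dictionary for complex arrangements.
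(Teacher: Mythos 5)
Your proposal is correct: the dictionary $rk\,\hyparr{C}=\dim\operatorname{span}\{\ell_{H}:H\in\hyparr{C}\}$ is the right reading of rank here, your contradiction via closedness gives (2), and your spanning argument $U'=U+\mathbb{C}\ell_{a}$ gives (1), including the degenerate case. The paper itself offers no proof --- it simply quotes Lemma~1.3 from Jambu--Papadima --- and your dual-space linear-algebra argument is essentially the standard one used there, so there is nothing to flag.
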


If $\hyparr{B}$ is closed and complete in $\hyparr{A}$ we shall put $\gamma:=f(a,b)$ in \redefn{1.2}.

\begin{defn}
The arrangement $\hyparr{B}$ is \emph{solvable} in $\hyparr{A}$ if it is closed and complete in $\hyparr{A}$ and if for any distinct elements $a, b, c \in \overline{\hyparr{B}}$ such that the elements $f(a,b)$, $f(b,c)$, $f(a,c)$ of $\hyparr{B}$ defined in \relem{1.3}\emph{(2)} are distinct one has that  $rk \{f(a,b), f(b,c), f(a,c)\}=2$.
\end{defn}


We are now ready for the definition of the main object of this section.

\begin{defn}
We say that $\hyparr{A}$ is \emph{hypersolvable} if there exists a sequence of arrangements $\hyparr{A}_1\subset \cdots \subset \hyparr{A}_i\subset \hyparr{A}_{i+1}\subset \cdots \subset \hyparr{A}_{\ell}$ with $rk \hyparr{A}_1=1$, $\hyparr{A}_{\ell}=\hyparr{A}$ such that $\hyparr{A}_i$ is solvable in $\hyparr{A}_{i+1}$, for $i=1,\ldots, \ell-1$. Such a sequence will be called a \emph{composition series} of $\hyparr{A}$. 
\end{defn}

We note that, from \cite[Proposition~1.10]{JP}, if $\hyparr{A}$ is a fiber-type arrangement (supersovable) then it has a composition series as in the last definition, so the class of fiber-type arrangements is inside in the class of hypersovable ones. From \cite{FR1} we know that the fundamental group of the complement of a fiber-type arrangement is an almost-direct product of free groups, and it admits a bi-ordering as proved independently in \cite{KR} and \cite{P}.

 Let $\hyparr{A}$ be a hypersolvable arrangement. Follows from \cite[Theorem~C, item (i)]{JP} that the fundamental group of the complement of $\hyparr{A}$, $G:=\pi_1(M_{\hyparr{A}})$, is an iterated almost-direct product of free groups, the ranks of the free groups are described in \cite[Corollary~4.4]{JP}. 
Consequently, $G$ fits naturally into the framework developed in this paper.

Fix Magnus-type orderings on each free factor appearing in the almost-direct product decomposition of $G$, and let ${\mathcal P}(F_{n_j})$ denote the corresponding positive cones.
The lexicographic construction of \redefn{cone_ADP} yields a positive cone $P \subset G$.

\begin{prop}\label{prop:hypersolvable}
The group $G=\pi_1(M_{\hyparr{A}})$ admits a bi-invariant ordering defined by the positive cone $P$.
\end{prop}

\begin{proof}
The proof is an immediate consequence of \rethm{main_cone}, since $G$ is an almost-direct product of free groups.
\end{proof}

\begin{rem}
The explicit nature of the positive cone $P$ provides additional algebraic structure on the fundamental group of the complement, which may be useful in the study of homological invariants and filtrations associated with hypersolvable arrangements.
\end{rem}

\subsection{Further examples}

The same method applies to several other families of groups admitting decompositions as almost-direct products of free groups. These include, for example, certain subgroups of automorphism groups of free groups \cite{BN}, as well as groups arising from generalized configuration spaces \cite{C1,JP}. In all such cases, the lexicographic construction yields explicit positive cones and bi-invariant orderings with analogous structural properties.

\section{Remarks}\label{sec:remarks}

In this final section we collect a few remarks highlighting the scope and significance of the results obtained in this paper. Rather than pursuing further generalizations, our aim is to place the explicit construction of positive cones and bi-invariant orderings developed here in a broader algebraic and topological context.

\begin{rems}
\item[(1)]
The main purpose of this work is to make bi-invariant orderings on almost-direct products of free groups explicit through the description of their positive cones. While the existence of such orderings was previously known in many cases, the explicit nature of the cones constructed here provides a finer structural understanding of these groups.

\item[(2)]
The lexicographic construction developed in this paper is compatible with the natural filtrations arising from almost-direct product decompositions. In particular, the convexity of canonical subgroups and the compatibility with projection maps show that the resulting bi-orderings interact well with the algebraic structure of the group.

\item[(3)]
Many of the groups considered here arise as fundamental groups of configuration spaces and related topological objects. From this perspective, the explicit bi-orderings described in this work may be viewed as additional algebraic structures on these fundamental groups, potentially useful in the study of fibrations, filtrations, and group-theoretic invariants associated with such spaces.

\item[(4)]
Since every bi-orderable group admits faithful actions by order-preserving homeomorphisms of the real line, the explicit positive cones constructed here provide natural models for such actions. The compatibility properties established in \resec{properties} suggest that these actions respect the hierarchical structure induced by the almost-direct product decomposition.

\item[(5)]
The methods developed in this paper apply uniformly to a wide class of groups, including pure monomial braid groups and McCool groups. It would be natural to investigate further classes of groups admitting almost-direct product decompositions, as well as refinements of the constructions presented here in more specialized settings.

\item[(6)] From a dynamical perspective, bi-invariant orderings are closely related to faithful actions by order-preserving homeomorphisms of the real line. As discussed in \cite{DNR}, the structure of the positive cone plays a central role in understanding the qualitative features of such actions. In this sense, the explicit and computable positive cones constructed in this paper provide natural candidates for studying actions of almost-direct products of free groups on one-dimensional manifolds that reflect their internal algebraic filtrations.

\item[(7)]
Several constructions of bi-invariant orderings in geometric and topological settings are based on geometric data, such as actions on trees, foliations, or configuration spaces; see, for example, \cite{I}. Although the approach developed here is purely algebraic, it is compatible with these geometric viewpoints in the sense that the resulting orderings are defined by explicit positive cones and normal forms. This makes it possible to compare algebraic and geometric constructions of orderings within a unified framework.

\item[(8)] 
Invariant orderings under automorphism groups play an important role in the study of free groups and their automorphisms, with applications to low-dimensional topology; see \cite{RW}. The stability properties established in \resec{properties} show that the lexicographic bi-orderings constructed here are preserved by natural classes of automorphisms arising in braid groups and McCool-type groups. 
This suggests potential applications to the study of monodromy representations and topological invariants associated with configuration spaces.

\end{rems}

\appendix
\section{Reduced free groups and reduced Magnus orderings}\label{app:reduced}

This appendix recalls the notion of reduced free groups and the associated reduced Magnus orderings.
Although these objects are not required for the main results of the paper, they provide a natural extension of the constructions developed in the main text and recover results previously obtained in more general settings.

\subsection{Reduced free groups}

Let $F_n$ be the free group on generators $x_1,\dots,x_n$.
The \emph{reduced free group} $\widehat{F}_n$ is defined as the quotient of $F_n$ by the normal subgroup generated by all commutators of conjugates of generators, that is,
\[
\widehat{F}_n \;=\;
F_n \big/ \ang{ [x_i^g,x_j^h] \;\mid\; 1\leq i,j\leq n,\ g,h\in F_n }.
\]
Equivalently, $\widehat{F}_n$ is the largest quotient of $F_n$ in which conjugates of distinct generators commute. Reduced free groups arise naturally in the study of homotopy string links and related objects; see \cite{dLdM,Y}.

\subsection{Reduced Magnus expansion}

Let $\Z\langle\!\langle X_1,\dots,X_n\rangle\!\rangle$ denote the ring of formal power series in non-commuting variables. The classical Magnus expansion induces an injective homomorphism
\[
F_n \longrightarrow 1 + \langle X_1,\dots,X_n\rangle
\subset \Z\langle\!\langle X_1,\dots,X_n\rangle\!\rangle.
\]
Factoring out the ideal generated by all commutators $[X_i^g,X_j^h]$ yields a reduced Magnus expansion
\[
\widehat{F}_n \longrightarrow 1 + \langle X_1,\dots,X_n\rangle_{\mathrm{red}},
\]
which is injective; see \cite{Y}.

Ordering the target ring lexicographically induces a bi-invariant ordering on $\widehat{F}_n$, referred to as the \emph{reduced Magnus ordering}. We denote by ${\mathcal P}(\widehat{F}_n)$ the associated positive cone.

\subsection{Almost-direct products of reduced free groups}

Let
\[
\widehat{G} \;=\;
\widehat{F}_{n_k} \rtimes \widehat{F}_{n_{k-1}} \rtimes \cdots \rtimes \widehat{F}_{n_1}
\]
be an almost-direct product of reduced free groups. As in the free case, every element of $\widehat{G}$ admits a unique normal form, and the induced actions are trivial on abelianizations.

Fix reduced Magnus orderings on each factor $\widehat{F}_{n_j}$ and denote by ${\mathcal P}(\widehat{F}_{n_j})$ the corresponding positive cones. Define a subset $P \subset \widehat{G}$ lexicographically, by declaring an element positive if and only if its highest nontrivial component lies in ${\mathcal P}(\widehat{F}_{n_j})$.

\begin{prop}\label{prop:reduced}
The subset $P \subset \widehat{G}$ defines a bi-invariant ordering on the almost-direct
product $\widehat{G}$.
\end{prop}

\begin{proof}
The proof follows the same argument as in \rethm{main_cone}. The reduced Magnus ordering is bi-invariant and preserved under IA-actions, and the lexicographic construction yields a conjugation-invariant positive cone. 
\end{proof}

\begin{rem}
The results of this appendix extend the lexicographic construction of positive cones to almost-direct products of reduced free groups. We have chosen to include this material in an appendix in order to keep the main body of the paper focused on free groups, while still recording the broader scope of the method.
\end{rem}

\end{document}